\newtheorem{theorem}{Theorem}[section]
\newtheorem{lemma}[theorem]{{\bf Lemma}}
\newtheorem{rem}[theorem]{{\bf Remark}}
\newtheorem{ex}[theorem]{{\bf Example}}
\newtheorem{definition}{Definition}[section]
\numberwithin{equation}{section}
\newenvironment{proof}{\indent{\em Proof:}}{\quad \hfill
$\Box$\vspace*{2ex}}
\begin{document}
\setcounter{page}{1}
\begin{center}
\vspace{0.3cm} {\large{\bf Analysis of Volterra Integrodifferential Equations with Nonlocal and Boundary Conditions via  Picard Operator}} \\
\vspace{0.4cm}
 Pallavi U. Shikhare $^{1}$ \\
jananishikhare13@gmail.com  \\

\vspace{0.30cm}
Kishor D. Kucche $^{2}$\\
kdkucche@gmail.com\\

\vspace{0.30cm}
 J. Vanterler da C. Sousa  $^{3}$\\
ra160908@ime.unicamp.br\\
\vspace{0.35cm}

\vspace{0.30cm}
$^{1,2}$ Department of Mathematics, Shivaji University, Kolhapur-416 004, Maharashtra, India.\\
$^{3}$  Department of Applied Mathematics, Imecc-Unicamp, 13083-859, Campinas, SP, Brazil.
\end{center}

\def\baselinestretch{1.0}\small\normalsize
\begin{abstract}
This article investigates the existence and uniqueness of solutions to the second order Volterra integrodifferential equations with nonlocal and boundary conditions through  its integral equivalent equations and fixed point of Banach.  Further, utilising the Picard operator theory we obtain the dependency of solutions on the initial nonlocal data and on functions involved on the right hand side of the equations. 
\end{abstract}
\noindent\textbf{Key words:} Nonlocal conditions; Integrodifferential equations; Fixed point theorem; Picard operator; Data dependency. \\
\noindent
\textbf{2010 Mathematics Subject Classification:}   45J05, 34G20, 47H10, 34B15.
\allowdisplaybreaks
\section{Introduction}

Over the years, the study of solutions of differential equations has been the subject of research, and continues for a number of reasons, from theoretical comfort, which is about existence, uniqueness, controllability, among others, and in the practical sense, involving the stability and continuous dependence on data \cite{Byszewski1,Byszewski3,Bednarz1,Byszewski4,bala,Bednarz}. So there are several attractive points to investigate the solutions of the various types of differential equations. On the other hand, we can also highlight the integrodifferential equations that have gained prominence in the scientific community \cite{bala,Lin,Muresan, Muresan1, Kucche3,Kucche4,Kucche5,kucche4,Kucche2}.

The nonlocal condition is a generalization of the classical initial condition.   Studies with the nonlocal conditions is driven by theoretical premium, yet additionally there are several events happened in   engineering, physics and life sciences that can be described by means of differential equations subject to nonlocal conditions \cite{Bates}--\cite{ Delgado1}. 
Therefore differential equations with nonlocal condition has  turned into an active  zone of research . 

 In 1991, Byszewski \cite{Byszewski1} introduced the nonlocal Cauchy problem in abstract spaces. The following years, Byszewski \cite{Byszewski3}, carried out another important work, also addressing the existence and uniqueness of classical solutions to a nonlocal Cauchy problem of the abstract type in a Banach space.
 In the literature many researcher has been commented on nonlocal conditions and investigated various class of differential and integrodifferential equations for existences, uniqueness and dependency of solutions \cite{Byszewski2,Balachandran1,Balachandran2,Teresa}.

Wang et al. \cite{Wang}, utilizing the Picard and weakly Picard  operator theory combined with   Bielecki  norms analysed
the nonlocal Cauchy problem in Banach spaces of the form: 
$$
w'(t)=f\left( t, w(t)\right),\, t\in [0,b], ~ w(0)= w_{0}+g(w),
$$ 
for   existence, uniqueness  and dependency of solutions. On the other hand, Otrocol and Ilea \cite{Ilea,Otrocol}, using the technique of Picard weak operators, examined the existence and qualitative properties of solutions for differential and integrodifferential  equations with abstract Volterra operators. 

There are many other important and interesting works related to this theme that will contribute to the growth of the theory and make possible new research, so we suggest some work for a more in-depth reading \cite{Byszewski4,Muresan,Teresa,Byszewski5,Egri,Kucche0}.  
 
Motivated by \cite{Byszewski3,Wang,Ilea,Byszewski5},  the main objective of this paper is to discuss some basic problems, such as, existence and uniqueness and dependency of solutions of the following second order Volterra integrodifferential equations with nonlocal and boundary conditions (VIDNBC)
\begin{small}
\begin{align}
& w^{''}(t)=\mathscr F\left( t,  w(t),w'(t),\int_{0}^{t}\mathscr G\left( t, s, w(s),w'(s)\right) ds\right),\,t\in J=[0,T],\, T>0,\label{1}\\
& w(0)+\sum_{k=1}^{p} c_{k} w(t_{k})=w_{0},\label{2}\\
& w'(T)= \beta\, w'(0),\,\, 1<\beta<+\infty, \label{3}
\end{align}
\end{small}
by utilizing the fixed point theorem and Picard operator theory,
where $0<t_{1}<t_{2}<\cdots<t_{p}\leq T$; $\mathscr F:J \times \mathbb{R} \times\mathbb{R}\times\mathbb{R} \to \mathbb{R} ,\, \mathscr G: J \times J \times\mathbb{R}\times\mathbb{R} \to \mathbb{R}$ are given functions satisfying some assumptions that will be specified later and $c_{1},c_{2}\cdots c_{p},\, (p\in \mathbb{N})$ are the constants such that $\sum\limits_{k=1}^{p} c_{k} \not = -1$.  

The growth and branching of the field of differential and integrodifferential equations with nonlocal and boundary conditions provide the scientific and the academic community the main reasons for the elaboration of the present paper. A systematic and detailed study under taken on the existence, uniqueness and dependencies of the data, provides a wider range of tools and work that may prove to be useful and important for future research.

This paper is organized as follows. Find out preliminarily facts in section 2. In section 3, we will investigate the first main result of the article, that is, the existence and uniqueness of solutions of   second order Volterra integrodifferential equations with nonlocal and boundary conditions, through Banach's contraction principle. Sections 4, identified with dependency of solutions through Picard operators theory. In section 5, we illustrate an example to verify the results. Concluding remakes closed the paper.

\section{Preliminaries} \label{preliminaries}
\begin{definition} [\cite{Wang},\cite{Rus1},\cite{Rus4}]. 
Let $\left(X, d \right)$  be a metric space. An operator $\mathcal{A}:X \rightarrow X $ is a Picard operator (PO), if there exists $w^{\ast }\in X $ satisfying the following conditions:
\begin{flushleft}
\begin{tabular}{cl}
{\rm (a)} & $\mathbf{F}_{\mathcal{A}}=\{w^{\ast }\}$, where $\mathbf{F}_{\mathcal{A}}:=\{w\in X  : \mathcal{A}\left( w\right) =w\}$. \\ 
{\rm (b)} & the sequence $\left( \mathcal{A}^{n}(w_{0})\right) _{n\in \mathbb{N}}$ converges to $w^{\ast }$ for all $w_{0}\in X $.
\end{tabular}
\end{flushleft}
\end{definition}
\begin{theorem}[\cite{Wang},\cite{Rus1},\cite{Rus4}]. \label{B1}
Let $(Y,d)$ be a complete metric space and $\mathcal{A},\mathcal{B}: Y\to Y$ two operators. We suppose the following:
\begin{flushleft}
\begin{tabular}{cl}
{\rm (a)} & $\mathcal{A}$ is a contraction with contraction constant $\alpha$ and $\mathbf{F}_{\mathcal{A}}=\{w^{\ast }_{\mathcal{A}}\}$ ; \\ 
{\rm (b)} &  $\mathcal{B}$ has fixed point and $w^{\ast }_{\mathcal{B}}\in\mathbf{F}_{\mathcal{B}} $;\\
{\rm (c)} & there exist $\rho>0$ such that $d\left( \mathcal{A}(w),\mathcal{B}(w)\right)\leq\rho$ for all $w\in Y.$
\end{tabular}
\end{flushleft}
Then $$d \left( w^{\ast }_{\mathcal{A}},w^{\ast }_{\mathcal{B}}\right)  \leq \frac{\rho}{1-\alpha}.$$
\end{theorem}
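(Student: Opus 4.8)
The plan is to combine the contraction property of $\mathcal{A}$ with the uniform closeness hypothesis (c), estimating $d(w^{\ast}_{\mathcal{A}}, w^{\ast}_{\mathcal{B}})$ by inserting the image of $w^{\ast}_{\mathcal{B}}$ under $\mathcal{A}$ as an intermediate point. First I would record the two facts that drive the argument: since $w^{\ast}_{\mathcal{A}}$ is the unique fixed point of $\mathcal{A}$ we have $\mathcal{A}(w^{\ast}_{\mathcal{A}}) = w^{\ast}_{\mathcal{A}}$, and since $w^{\ast}_{\mathcal{B}} \in \mathbf{F}_{\mathcal{B}}$ we have $\mathcal{B}(w^{\ast}_{\mathcal{B}}) = w^{\ast}_{\mathcal{B}}$. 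Then I would apply the triangle inequality in the form
$$
d\left(w^{\ast}_{\mathcal{A}}, w^{\ast}_{\mathcal{B}}\right) \leq d\left(w^{\ast}_{\mathcal{A}}, \mathcal{A}(w^{\ast}_{\mathcal{B}})\right) + d\left(\mathcal{A}(w^{\ast}_{\mathcal{B}}), w^{\ast}_{\mathcal{B}}\right).
$$

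Next I would bound each term on the right. For the first term, rewrite $w^{\ast}_{\mathcal{A}} = \mathcal{A}(w^{\ast}_{\mathcal{A}})$ and use that $\mathcal{A}$ is a contraction with constant $\alpha$, so
$$
d\left(w^{\ast}_{\mathcal{A}}, \mathcal{A}(w^{\ast}_{\mathcal{B}})\right) = d\left(\mathcal{A}(w^{\ast}_{\mathcal{A}}), \mathcal{A}(w^{\ast}_{\mathcal{B}})\right) \leq \alpha\, d\left(w^{\ast}_{\mathcal{A}}, w^{\ast}_{\mathcal{B}}\right).
$$
For the second term, rewrite $w^{\ast}_{\mathcal{B}} = \mathcal{B}(w^{\ast}_{\mathcal{B}})$ and apply hypothesis (c) with the particular choice $w = w^{\ast}_{\mathcal{B}} \in Y$, giving $d(\mathcal{A}(w^{\ast}_{\mathcal{B}}), \mathcal{B}(w^{\ast}_{\mathcal{B}})) \leq \rho$. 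Substituting both estimates yields $d(w^{\ast}_{\mathcal{A}}, w^{\ast}_{\mathcal{B}}) \leq \alpha\, d(w^{\ast}_{\mathcal{A}}, w^{\ast}_{\mathcal{B}}) + \rho$.

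Finally I would solve this inequality for the quantity of interest: since $\alpha < 1$ (it is a contraction constant), the factor $1-\alpha$ is strictly positive, so rearranging gives $(1-\alpha)\, d(w^{\ast}_{\mathcal{A}}, w^{\ast}_{\mathcal{B}}) \leq \rho$, hence $d(w^{\ast}_{\mathcal{A}}, w^{\ast}_{\mathcal{B}}) \leq \rho/(1-\alpha)$, which is the claimed bound. This argument is short and essentially mechanical; there is no serious obstacle, though one should note implicitly that $d(w^{\ast}_{\mathcal{A}}, w^{\ast}_{\mathcal{B}})$ is finite (automatic in a metric space) so that the rearrangement of the inequality is legitimate. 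The only modeling point worth stating clearly is that hypothesis (b) does not assert uniqueness of the fixed point of $\mathcal{B}$, so the conclusion should be read as holding for any choice of $w^{\ast}_{\mathcal{B}} \in \mathbf{F}_{\mathcal{B}}$.
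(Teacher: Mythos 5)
Your proof is correct: the decomposition $d(w^{\ast}_{\mathcal{A}},w^{\ast}_{\mathcal{B}})\leq d(\mathcal{A}(w^{\ast}_{\mathcal{A}}),\mathcal{A}(w^{\ast}_{\mathcal{B}}))+d(\mathcal{A}(w^{\ast}_{\mathcal{B}}),\mathcal{B}(w^{\ast}_{\mathcal{B}}))$, followed by the contraction estimate, hypothesis (c) at the point $w^{\ast}_{\mathcal{B}}$, and the rearrangement using $\alpha<1$, is exactly the standard argument for this data-dependence lemma. Note that the paper itself states this theorem as a quoted result from its references and gives no proof, so there is nothing to compare against; your argument matches the classical one in those sources, and your closing remark that the bound holds for every choice of $w^{\ast}_{\mathcal{B}}\in\mathbf{F}_{\mathcal{B}}$ (no uniqueness needed for $\mathcal{B}$) is the right reading of hypothesis (b).
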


\section{Existence and uniqueness:}
\begin{definition}
The function $w\in C^{2}(J,\mathbb{R})$ is said to be solution of the problem \eqref{1}--\eqref{3} if $w$ satisfied  \eqref{1} and nonlocal and boundary conditions  \eqref{2} and  \eqref{3} respectively.
\end{definition}
\begin{lemma}
A function $w\in C^{2}(J,\mathbb{R})$ is a solution of VIDNBC \eqref{1}--\eqref{3} if and only if $w\in C^{1}(J,\mathbb{R})$ is a solution of the following integral equations:
\begin{small}
\begin{align}\label{e1}
& w(t)=\left( w_{0}-\sum_{k=1}^{p}c_{k}\left[ \frac{t_{k}}{\beta-1} \int_{0}^{T}\mathscr F\left( s,  w(s),w'(s),\int_{0}^{s}\mathscr G\left( s, \sigma, w(\sigma),w'(\sigma)\right)d\sigma\right) ds \right.\right.\nonumber\\
&\qquad\left.\left.+\int_{0}^{t_{k}}(t_{k}-s)\mathscr F\left( s,  w(s),w'(s),\int_{0}^{s}\mathscr G\left( s, \sigma, w(\sigma),w'(\sigma)\right)d\sigma\right) ds\right] \right)\bigg/\left( 1+\sum_{k=1}^{p}c_{k}\right) \nonumber\\
&\qquad \quad +\frac{t}{\beta-1} \int_{0}^{T}\mathscr F\left( s,  w(s),w'(s),\int_{0}^{s}\mathscr G\left( s, \sigma, w(\sigma),w'(\sigma)\right)d\sigma\right) ds \nonumber\\
&\qquad \qquad+\int_{0}^{t}(t-s)\mathscr F\left( s,  w(s),w'(s),\int_{0}^{s}\mathscr G\left( s, \sigma, w(\sigma),w'(\sigma)\right)d\sigma\right) ds.
\end{align}
\end{small}
\end{lemma}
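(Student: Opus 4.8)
The plan is to prove both implications by direct integration, keeping careful track of the boundary and nonlocal conditions. Throughout, write
$$H(s) := \mathscr F\left(s, w(s), w'(s), \int_0^s \mathscr G\left(s, \sigma, w(\sigma), w'(\sigma)\right)d\sigma\right),$$
which is continuous on $J$ whenever $w \in C^1(J,\mathbb R)$, since $\mathscr F$ and $\mathscr G$ are continuous and $s \mapsto \int_0^s \mathscr G(s,\sigma,\cdot,\cdot)\,d\sigma$ is continuous.

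\emph{Necessity.} Assume $w \in C^2(J,\mathbb R)$ solves \eqref{1}--\eqref{3}. Integrating $w''(t) = H(t)$ over $[0,t]$ gives $w'(t) = w'(0) + \int_0^t H(s)\,ds$; evaluating at $t = T$ and using \eqref{3} yields $(\beta-1)w'(0) = \int_0^T H(s)\,ds$, hence $w'(0) = \frac{1}{\beta-1}\int_0^T H(s)\,ds$ (this is the only place $\beta>1$ enters). Integrating once more over $[0,t]$ and applying the Dirichlet interchange formula $\int_0^t \int_0^\tau H(s)\,ds\,d\tau = \int_0^t (t-s)H(s)\,ds$ gives $w(t) = w(0) + \frac{t}{\beta-1}\int_0^T H(s)\,ds + \int_0^t (t-s)H(s)\,ds$. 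Setting $t = t_k$, multiplying by $c_k$, summing over $k$, and substituting into \eqref{2} produces a linear equation for $w(0)$ with coefficient $1 + \sum_{k=1}^p c_k \neq 0$; solving for $w(0)$ and substituting back yields exactly \eqref{e1}.

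\emph{Sufficiency.} Suppose $w \in C^1(J,\mathbb R)$ satisfies \eqref{e1}. The first grouped term of \eqref{e1} is independent of $t$, so differentiating and using the Leibniz rule (writing $\int_0^t (t-s)H(s)\,ds = t\int_0^t H(s)\,ds - \int_0^t sH(s)\,ds$, so the pointwise term $tH(t)$ cancels) gives $w'(t) = \frac{1}{\beta-1}\int_0^T H(s)\,ds + \int_0^t H(s)\,ds$. Since $H$ is continuous, $w' \in C^1(J,\mathbb R)$, so $w \in C^2(J,\mathbb R)$ and $w''(t) = H(t)$, which is \eqref{1}. Evaluating the formula for $w'$ at $t=0$ and $t=T$ gives $w'(T) = \frac{\beta}{\beta-1}\int_0^T H(s)\,ds = \beta\,w'(0)$, i.e. \eqref{3}. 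Finally, evaluating \eqref{e1} at $t=0$ identifies $w(0)$ with the grouped constant term, while $t=t_k$ gives $w(t_k) = w(0) + \frac{t_k}{\beta-1}\int_0^T H(s)\,ds + \int_0^{t_k}(t_k-s)H(s)\,ds$; forming $w(0) + \sum_{k=1}^p c_k w(t_k)$ the integral terms cancel against those hidden inside $w(0)$, leaving $w_0$, which is \eqref{2}.

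I expect the only genuine obstacle to be bookkeeping: carrying the two nested integrations cleanly through the boundary and nonlocal conditions, and justifying in the converse that $H$ is continuous from the mere assumption $w \in C^1(J,\mathbb R)$, so that the differentiation of \eqref{e1} and the resulting regularity upgrade to $C^2(J,\mathbb R)$ are legitimate. The interchange of integration order in the Dirichlet formula is routine by Fubini's theorem, since $H$ is continuous on the compact interval $J$.
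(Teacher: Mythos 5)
Your proposal is correct and follows essentially the same route as the paper: integrate \eqref{1} twice, use the boundary condition \eqref{3} to determine $w'(0)$, use the nonlocal condition \eqref{2} together with $1+\sum_{k=1}^{p}c_{k}\neq 0$ to determine $w(0)$, and for the converse differentiate \eqref{e1} and verify \eqref{1}--\eqref{3} directly. Your explicit remarks on the continuity of the integrand and the resulting regularity upgrade from $C^{1}$ to $C^{2}$ are points the paper leaves implicit, but the argument is the same.
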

\begin{proof}
Let $w\in C^{2}(J,\mathbb{R})$ is a solution of problem \eqref{1}--\eqref{3}.
Integrating  \eqref{1} from $0$ from $t$, we get
\begin{small}
\begin{align}\label{e2}
w'(t)=w'(0)+\int_{0}^{t}\mathscr F\left( s,  w(s),w'(s),\int_{0}^{s}\mathscr G\left( s, \sigma, w(\sigma),w'(\sigma)\right)d\sigma\right) ds. 
\end{align}
\end{small}
Again integrate above equation from $0$ from $t$, we have
\begin{small}
\begin{align}\label{e3}
w(t)=w(0)+w'(0)t+\int_{0}^{t}\mathscr (t-s) \mathscr F\left( s,  w(s),w'(s),\int_{0}^{s}\mathscr G\left( s, \sigma, w(\sigma),w'(\sigma)\right)d\sigma\right) ds. 
\end{align}
\end{small}
From \eqref{e2}, we have
\begin{small}
\begin{align}\label{e3.1}
w'(T)=w'(0)+\int_{0}^{T}\mathscr F\left( s,  w(s),w'(s),\int_{0}^{s}\mathscr G\left( s, \sigma, w(\sigma),w'(\sigma)\right)d\sigma\right) ds.
\end{align}
\end{small}
From \eqref{e3.1} and \eqref{3}, we have 
\begin{small}
\begin{align*}
 \beta\, w'(0)=w'(0)+\int_{0}^{T}\mathscr F\left( s,  w(s),w'(s),\int_{0}^{s}\mathscr G\left( s, \sigma, w(\sigma),w'(\sigma)\right)d\sigma\right) ds.
\end{align*}
\end{small}
This gives
\begin{small}
\begin{align}\label{e4}
 w'(0)=\frac{1}{\beta-1}\int_{0}^{T}\mathscr F\left( s,  w(s),w'(s),\int_{0}^{s}\mathscr G\left( s, \sigma, w(\sigma),w'(\sigma)\right)d\sigma\right) ds.
\end{align}
\end{small}
Putting the value of $w'(0)$ from \eqref{e4} in  equation \eqref{e3},  we obtain 
\begin{small}
\begin{align}\label{e5}
w(t)& =w(0)+\frac{t}{\beta-1}\int_{0}^{T}\mathscr F\left( s,  w(s),w'(s),\int_{0}^{s}\mathscr G\left( s, \sigma, w(\sigma),w'(\sigma)\right)d\sigma\right) ds\nonumber\\
&\qquad +\int_{0}^{t}\mathscr (t-s) \mathscr F\left( s,  w(s),w'(s),\int_{0}^{s}\mathscr G\left( s, \sigma, w(\sigma),w'(\sigma)\right)d\sigma\right) ds.
\end{align}
\end{small}
Therefore for any $k\, (k= 1,2,\dots,p)$
\begin{small}
\begin{align}\label{e3.2}
w(t_{k})& =w(0)+\frac{t_{k}}{\beta-1}\int_{0}^{T}\mathscr F\left( s,  w(s),w'(s),\int_{0}^{s}\mathscr G\left( s, \sigma, w(\sigma),w'(\sigma)\right)d\sigma\right) ds\nonumber\\
&\qquad +\int_{0}^{t_{k}}\mathscr (t_{k}-s) \mathscr F\left( s,  w(s),w'(s),\int_{0}^{s}\mathscr G\left( s, \sigma, w(\sigma),w'(\sigma)\right)d\sigma\right) ds.
\end{align}
\end{small}
Using \eqref{e3.2} in the nonlocal conditions \eqref{2} we obtain  
\begin{small}
\begin{align*}
& w(0)=\left( w_{0}-\sum_{k=1}^{p}c_{k}\left[ \frac{t_{k}}{\beta-1} \int_{0}^{T}\mathscr F\left( s,  w(s),w'(s),\int_{0}^{s}\mathscr G\left( s, \sigma, w(\sigma),w'(\sigma)\right)d\sigma\right) ds \right.\right.\nonumber\\
&\qquad\left.\left.+\int_{0}^{t_{k}}(t_{k}-s)\mathscr F\left( s,  w(s),w'(s),\int_{0}^{s}\mathscr G\left( s, \sigma, w(\sigma),w'(\sigma)\right)d\sigma\right) ds\right] \right)\bigg/\left( 1+\sum_{k=1}^{p}c_{k}\right). 
\end{align*}
\end{small}
Putting this value of $w(0)$ in \eqref{e5}, we get
\begin{small}
\begin{align*}
& w(t)=\left( w_{0}-\sum_{k=1}^{p}c_{k}\left[ \frac{t_{k}}{\beta-1} \int_{0}^{T}\mathscr F\left( s,  w(s),w'(s),\int_{0}^{s}\mathscr G\left( s, \sigma, w(\sigma),w'(\sigma)\right)d\sigma\right) ds \right.\right.\nonumber\\
&\qquad\left.\left.+\int_{0}^{t_{k}}(t_{k}-s)\mathscr F\left( s,  w(s),w'(s),\int_{0}^{s}\mathscr G\left( s, \sigma, w(\sigma),w'(\sigma)\right)d\sigma\right) ds\right] \right)\bigg/\left( 1+\sum_{k=1}^{p}c_{k}\right) \nonumber\\
&\qquad \quad +\frac{t}{\beta-1} \int_{0}^{T}\mathscr F\left( s,  w(s),w'(s),\int_{0}^{s}\mathscr G\left( s, \sigma, w(\sigma),w'(\sigma)\right)d\sigma\right) ds \nonumber\\
&\qquad \qquad+\int_{0}^{t}(t-s)\mathscr F\left( s,  w(s),w'(s),\int_{0}^{s}\mathscr G\left( s, \sigma, w(\sigma),w'(\sigma)\right)d\sigma\right) ds.
\end{align*}
\end{small}
Which is  equation \eqref{e1}. Conversely let $w\in C^{1}(J,\mathbb{R})$ is solution of \eqref{e1} we prove that $w$ satisfied \eqref{1}--\eqref{3}. Differentiating \eqref{e1} with respect to $t$ we get 
\begin{small}
\begin{align}\label{e8}
& w'(t)=\frac{1}{\beta-1} \int_{0}^{T}\mathscr F\left( s,  w(s),w'(s),\int_{0}^{s}\mathscr G\left( s, \sigma, w(\sigma),w'(\sigma)\right)d\sigma\right) ds \nonumber\\
&\qquad \qquad+\int_{0}^{t}\mathscr F\left( s,  w(s),w'(s),\int_{0}^{s}\mathscr G\left( s, \sigma, w(\sigma),w'(\sigma)\right)d\sigma\right) ds.
\end{align}
\end{small}
On differentiating \eqref{e8}, we obtain
\begin{small}
$$w^{''}(t)=\mathscr F\left( t,  w(t),w'(t),\int_{0}^{t}\mathscr G\left( t, s, w(s),w'(s)\right) ds\right),\,t\in J,$$ 
\end{small}
which is \eqref{1}. From \eqref{e8}, we have
\begin{small}
$$w'(0)=\frac{1}{\beta-1} \int_{0}^{T}\mathscr F\left( s,  w(s),w'(s),\int_{0}^{s}\mathscr G\left( s, \sigma, w(\sigma),w'(\sigma)\right)d\sigma\right) ds $$
\end{small}
 and
\begin{small}
\begin{align*}
w'(T)& =\frac{1}{\beta-1} \int_{0}^{T}\mathscr F\left( s,  w(s),w'(s),\int_{0}^{s}\mathscr G\left( s, \sigma, w(\sigma),w'(\sigma)\right)d\sigma\right) ds \nonumber\\
&\qquad \qquad+\int_{0}^{T}\mathscr F\left( s,  w(s),w'(s),\int_{0}^{s}\mathscr G\left( s, \sigma, w(\sigma),w'(\sigma)\right)d\sigma\right) ds.\\
& =\left( \frac{\beta}{\beta-1}\right)  \int_{0}^{T}\mathscr F\left( s,  w(s),w'(s),\int_{0}^{s}\mathscr G\left( s, \sigma, w(\sigma),w'(\sigma)\right)d\sigma\right) ds \\
& = \beta\, w'(0).
\end{align*}
\end{small}
Further, from \eqref{e8}, we have
\begin{small}
\begin{align*}
w(0)& =\left( w_{0}-\sum_{k=1}^{p}c_{k}\left[ \frac{t_{k}}{\beta-1} \int_{0}^{T}\mathscr F\left( s,  w(s),w'(s),\int_{0}^{s}\mathscr G\left( s, \sigma, w(\sigma),w'(\sigma)\right)d\sigma\right) ds \right.\right.\nonumber\\
&\quad\left.\left.+\int_{0}^{t_{k}}(t_{k}-s)\mathscr F\left( s,  w(s),w'(s),\int_{0}^{s}\mathscr G\left( s, \sigma, w(\sigma),w'(\sigma)\right)d\sigma\right) ds\right] \right)\bigg/\left( 1+\sum_{k=1}^{p}c_{k}\right)
\end{align*}
\end{small} 
and for each $i\, (i= 1,2,\dots p)$,
\begin{small}
\begin{align*}
& w(t_{i})=\left( w_{0}-\sum_{k=1}^{p}c_{k}\left[ \frac{t_{k}}{\beta-1} \int_{0}^{T}\mathscr F\left( s,  w(s),w'(s),\int_{0}^{s}\mathscr G\left( s, \sigma, w(\sigma),w'(\sigma)\right)d\sigma\right) ds \right.\right.\nonumber\\
&\qquad\left.\left.+\int_{0}^{t_{k}}(t_{k}-s)\mathscr F\left( s,  w(s),w'(s),\int_{0}^{s}\mathscr G\left( s, \sigma, w(\sigma),w'(\sigma)\right)d\sigma\right) ds\right] \right)\bigg/\left( 1+\sum_{k=1}^{p}c_{k}\right) \nonumber\\
&\qquad \quad +\frac{t_{i}}{\beta-1} \int_{0}^{T}\mathscr F\left( s,  w(s),w'(s),\int_{0}^{s}\mathscr G\left( s, \sigma, w(\sigma),w'(\sigma)\right)d\sigma\right) ds \nonumber\\
&\qquad \qquad+\int_{0}^{t_{i}}(t_{i}-s)\mathscr F\left( s,  w(s),w'(s),\int_{0}^{s}\mathscr G\left( s, \sigma, w(\sigma),w'(\sigma)\right)d\sigma\right) ds .
\end{align*}
\end{small} 
Therefore 
\begin{small}
\begin{align*}
& w(0)+\sum_{i=1}^{p} c_{i} w(t_{i})\\
& = \left( w_{0}-\sum_{k=1}^{p}c_{k}\left[ \frac{t_{k}}{\beta-1} \int_{0}^{T}\mathscr F\left( s,  w(s),w'(s),\int_{0}^{s}\mathscr G\left( s, \sigma, w(\sigma),w'(\sigma)\right)d\sigma\right) ds \right.\right.\nonumber\\
&\qquad\left.\left.+\int_{0}^{t_{k}}(t_{k}-s)\mathscr F\left( s,  w(s),w'(s),\int_{0}^{s}\mathscr G\left( s, \sigma, w(\sigma),w'(\sigma)\right)d\sigma\right) ds\right] \right)\bigg/\left( 1+\sum_{k=1}^{p}c_{k}\right)\\
& \qquad+ \sum_{i=1}^{p} c_{i}\left\lbrace\left( w_{0}-\sum_{k=1}^{p}c_{k}\left[ \frac{t_{k}}{\beta-1} \int_{0}^{T}\mathscr F\left( s,  w(s),w'(s),\int_{0}^{s}\mathscr G\left( s, \sigma, w(\sigma),w'(\sigma)\right)d\sigma\right) ds \right.\right.\right.\nonumber\\
&\qquad\left.\left.\left.+\int_{0}^{t_{k}}(t_{k}-s)\mathscr F\left( s,  w(s),w'(s),\int_{0}^{s}\mathscr G\left( s, \sigma, w(\sigma),w'(\sigma)\right)d\sigma\right) ds\right] \right)\bigg/\left( 1+\sum_{k=1}^{p}c_{k}\right) \right. \nonumber\\
&\left.\qquad +\frac{t_{i}}{\beta-1} \int_{0}^{T}\mathscr F\left( s,  w(s),w'(s),\int_{0}^{s}\mathscr G\left( s, \sigma, w(\sigma),w'(\sigma)\right)d\sigma\right) ds \right.\nonumber\\
&\left. \qquad+\int_{0}^{t_{i}}(t_{i}-s)\mathscr F\left( s,  w(s),w'(s),\int_{0}^{s}\mathscr G\left( s, \sigma, w(\sigma),w'(\sigma)\right)d\sigma\right) ds \right\rbrace\\ 
&= w_{0}.
\end{align*}
\end{small} 
Which is conditions \eqref{3}. This complete the proof.
\end{proof}
\begin{theorem} \label{thm1}
Assume that:
\begin{itemize}
\item[(H1)] Let $\mathscr F \in \left(J \times \mathbb{R}\times \mathbb{R}\times \mathbb{R},\mathbb{R}  \right),\,\mathscr G \in \left(J \times J \times \mathbb{R}\times \mathbb{R},\mathbb{R}  \right) $  and there exist constants $L_{\mathscr F}, L_{\mathscr G}> 0$ such that
\begin{small}
 $$ \left|\mathscr F(t,w_{1},w_{2},w_{3})-\mathscr F(t,v_{1},v_{2},v_{3}) \right|\leq L_{\mathscr F} \left(\sum_{j=1}^{3} \left| w_{j}-v_{j}\right| \right) $$
\end{small}
and
\begin{small}
$$ \left|\mathscr G(t,s,w_{1},w_{2})-\mathscr G(t,s,v_{1},v_{2}) \right|\leq L_{\mathscr G} \left(\sum_{j=1}^{2} \left| w_{j}-v_{j}\right| \right) $$
\end{small}
for all $t,s \in J$ and $w_{j},v_{j}\in \mathbb{R}\, (j=1,2,3)$.
\item[(H2)] There exist a constant $\gamma >0$ such that 
\begin{small}
$$ q= \frac{L_{\mathscr F}}{\gamma}\left(1+\frac{L_{\mathscr G}}{\gamma} \right)\left( 1+\left[ 1+\left\lbrace  T\beta+T\beta\left| \dfrac{\sum\limits_{k=1}^{p}c_{k}}{1+\sum\limits_{k=1}^{p}c_{k}} \right| \right\rbrace  \right]\frac{e^{\gamma\,T}}{\beta-1} \right) <1 .$$
\end{small}
\end{itemize}
Then the VIDNBC \eqref{1}--\eqref{3} has a unique solution in $C^{2}(J, \mathbb{R})$.
\end{theorem}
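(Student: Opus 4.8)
The plan is to turn the integral equation \eqref{e1} into a fixed point problem on $C^{1}(J,\mathbb{R})$ and apply Banach's contraction principle with a Bielecki-type norm tuned to the constant $\gamma$ of (H2). Define $\mathcal{A}\colon C^{1}(J,\mathbb{R})\to C^{1}(J,\mathbb{R})$ by letting $(\mathcal{A}w)(t)$ be the right-hand side of \eqref{e1}; differentiating under the integral sign exactly as in the proof of the preceding Lemma shows that $(\mathcal{A}w)'$ equals the right-hand side of \eqref{e8}, which is continuous whenever $w\in C^{1}(J,\mathbb{R})$ and $\mathscr{F},\mathscr{G}$ are continuous, so $\mathcal{A}$ is well defined. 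On $X=C^{1}(J,\mathbb{R})$ I would use the norm $\|w\|_{B}=\max_{t\in J}\bigl(|w(t)|+|w'(t)|\bigr)e^{-\gamma t}$; it is equivalent to the usual $C^{1}$-norm, so $(X,\|\cdot\|_{B})$ is a Banach space, and by the Lemma any fixed point of $\mathcal{A}$ is precisely a solution of \eqref{1}--\eqref{3} in $C^{2}(J,\mathbb{R})$.

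The heart of the argument is a single Lipschitz estimate. For $w,v\in X$, applying (H1) to $\mathscr{F}$, then (H1) to $\mathscr{G}$ inside the inner integral, and then $|w(\sigma)-v(\sigma)|+|w'(\sigma)-v'(\sigma)|\leq \|w-v\|_{B}\,e^{\gamma\sigma}$ together with $\int_{0}^{s}e^{\gamma\sigma}\,d\sigma\leq \gamma^{-1}e^{\gamma s}$, one obtains, writing $g_{w}(s)=\mathscr{F}\bigl(s,w(s),w'(s),\int_{0}^{s}\mathscr{G}(s,\sigma,w(\sigma),w'(\sigma))\,d\sigma\bigr)$,
$$|g_{w}(s)-g_{v}(s)|\leq L_{\mathscr{F}}\Bigl(1+\tfrac{L_{\mathscr{G}}}{\gamma}\Bigr)\,e^{\gamma s}\,\|w-v\|_{B},\qquad s\in J.$$
I would then feed this into the difference $(\mathcal{A}w)(t)-(\mathcal{A}v)(t)$, which splits into the nonlocal sum over $k$, the term $\frac{t}{\beta-1}\int_{0}^{T}$, and the Volterra term $\int_{0}^{t}(t-s)$ (the $w_{0}$-terms cancel), and into $(\mathcal{A}w)'(t)-(\mathcal{A}v)'(t)$, which from \eqref{e8} carries only the first and last of these with weights $\tfrac{1}{\beta-1}$ and $1$. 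Using $\int_{0}^{T}e^{\gamma s}\,ds\leq \gamma^{-1}e^{\gamma T}$, $\int_{0}^{t}e^{\gamma s}\,ds\leq \gamma^{-1}e^{\gamma t}$, the bounds $t,\,t_{k},\,t-s,\,t_{k}-s\leq T$, multiplying by $e^{-\gamma t}\leq 1$, and adding the $w$- and $w'$-contributions, I get $\|\mathcal{A}w-\mathcal{A}v\|_{B}\leq q\,\|w-v\|_{B}$ with $q$ exactly the constant of (H2). Since (H2) gives $q<1$, $\mathcal{A}$ is a contraction; Banach's theorem provides a unique fixed point $w^{\ast}\in C^{1}(J,\mathbb{R})$, and the Lemma upgrades it to the unique solution of the VIDNBC in $C^{2}(J,\mathbb{R})$.

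The only place demanding care is the bookkeeping that turns the collection of bounds into precisely the displayed $q$. After the estimates, the $w'$-part contributes $\frac{L_{\mathscr{F}}}{\gamma}\bigl(1+\frac{L_{\mathscr{G}}}{\gamma}\bigr)\bigl(1+\frac{e^{\gamma T}}{\beta-1}\bigr)\|w-v\|_{B}$, and each bracket inside the nonlocal sum is dominated by the same $k$-independent quantity $\frac{T\beta e^{\gamma T}}{\gamma(\beta-1)}$ once one uses $\frac{1}{\beta-1}+1=\frac{\beta}{\beta-1}$; the remaining $w$-terms give $\frac{Te^{\gamma T}}{\beta-1}+T$, which is $\leq \frac{T\beta e^{\gamma T}}{\beta-1}$ since $e^{\gamma T}>1$ (valid because $\gamma,T>0$). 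Collecting the pieces yields the common factor $1+\bigl[1+T\beta+T\beta\bigl|\tfrac{\sum_{k}c_{k}}{1+\sum_{k}c_{k}}\bigr|\bigr]\frac{e^{\gamma T}}{\beta-1}$, i.e.\ $q$. Everything else --- well-definedness of $\mathcal{A}$, completeness of $(X,\|\cdot\|_{B})$, and the final appeal to Banach's principle and the Lemma --- is routine.
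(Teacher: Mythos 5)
Your proposal is correct and follows essentially the same route as the paper: the integral reformulation from the Lemma, the operator $\mathcal{P}$ on $C^{1}(J,\mathbb{R})$ with the weighted norm $\max_{t\in J}\bigl(|w(t)|+|w'(t)|\bigr)e^{-\gamma t}$, the same Lipschitz bookkeeping with $\int_{0}^{s}e^{\gamma\sigma}d\sigma\leq\gamma^{-1}e^{\gamma s}$ and $t,t_{k},t-s\leq T$ leading to exactly the constant $q$ of (H2), and Banach's contraction principle. The only difference is organizational (you isolate the estimate for $|g_{w}(s)-g_{v}(s)|$ first), which is the same computation the paper carries out inline.
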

\begin{proof}
Consider the space $C^{1}(J,\mathbb{R})$ with the norm 
$$ \left\| w\right\|_{1}=\underset{t\in J}{\max}\left\lbrace \frac{\left|w(t) \right|+\left|w'(t) \right| }{e^{\gamma\, t}} \right\rbrace,\,\gamma>0,\,w\in C^{1}(J,\mathbb{R}).  $$
Then $ \left(C^{1}(J,\mathbb{R}),\left\| \cdot\right\|_{1}  \right) $ is a Banach space. Define the operator $\mathcal{P}: (C^{1}(J,\mathbb{R}), \left\| \cdot\right\|_{1}  ) \to (C^{1}(J,\mathbb{R}), \left\| \cdot\right\|_{1}  )$
\begin{small}
\begin{align*}
&\mathcal{P}(w)(t)=\left( w_{0}-\sum_{k=1}^{p}c_{k}\left[ \frac{t_{k}}{\beta-1} \int_{0}^{T}\mathscr F\left( s,  w(s),w'(s),\int_{0}^{s}\mathscr G\left( s, \sigma, w(\sigma),w'(\sigma)\right)d\sigma\right) ds \right.\right.\nonumber\\
&\qquad\qquad\left.\left.+\int_{0}^{t_{k}}(t_{k}-s)\mathscr F\left( s,  w(s),w'(s),\int_{0}^{s}\mathscr G\left( s, \sigma, w(\sigma),w'(\sigma)\right)d\sigma\right) ds\right] \right)\bigg/\left( 1+\sum_{k=1}^{p}c_{k}\right) \nonumber\\
&\qquad \qquad\quad +\frac{t}{\beta-1} \int_{0}^{T}\mathscr F\left( s,  w(s),w'(s),\int_{0}^{s}\mathscr G\left( s, \sigma, w(\sigma),w'(\sigma)\right)d\sigma\right) ds \nonumber\\
&\qquad \qquad\qquad+\int_{0}^{t}(t-s)\mathscr F\left( s,  w(s),w'(s),\int_{0}^{s}\mathscr G\left( s, \sigma, w(\sigma),w'(\sigma)\right)d\sigma\right) ds.
\end{align*}
\end{small}
Then the fixed point of  $ w=\mathcal{P} w,\,\, w\in \left(C^{1}(J,\mathbb{R}),\left\| \cdot\right\|_{1}  \right) $ is the solution of \eqref{1}--\eqref{3}. Let any $w,v \in C^{1}(J,\mathbb{R})$ and $t\in J$. Then
\begin{small}
\begin{align}\label{e9}
&\left|(\mathcal{P}w)(t)-(\mathcal{P}v)(t) \right|\nonumber\\
& \leq\left| \dfrac{\sum\limits_{k=1}^{p}c_{k}}{1+\sum\limits_{k=1}^{p}c_{k}} \right| \left| \frac{t_{k}}{\beta-1} \int_{0}^{T} \left\lbrace  \mathscr F\left( s,  w(s),w'(s),\int_{0}^{s}\mathscr G\left( s, \sigma, w(\sigma),w'(\sigma)\right)d\sigma\right)\right.\right.\nonumber\\
&\quad\left.\left.-F\left( s,  v(s),v'(s),\int_{0}^{s}\mathscr G\left( s, \sigma, v(\sigma),v'(\sigma)\right)d\sigma\right) \right\rbrace  ds \right.\nonumber\\
& \quad\left.+\int_{0}^{t_{k}}(t_{k}-s)\left\lbrace  \mathscr F\left( s,  w(s),w'(s),\int_{0}^{s}\mathscr G\left( s, \sigma, w(\sigma),w'(\sigma)\right)d\sigma\right)\right.\right.\nonumber\\
& \quad\left.\left.-\mathscr F\left( s,  v(s),v'(s),\int_{0}^{s}\mathscr G\left( s, \sigma, v(\sigma),v'(\sigma)\right)d\sigma\right)\right\rbrace   ds\right| \nonumber\\
&+ \left| \frac{t}{\beta-1} \int_{0}^{T}\left\lbrace \mathscr F\left( s,  w(s),w'(s),\int_{0}^{s}\mathscr G\left( s, \sigma, w(\sigma),w'(\sigma)\right)d\sigma\right)\right.\right.\nonumber\\
&\quad\left.\left.-F\left( s,  v(s),v'(s),\int_{0}^{s}\mathscr G\left( s, \sigma, v(\sigma),v'(\sigma)\right)d\sigma\right) \right\rbrace  ds \right.\nonumber\\
& \quad\left.+\int_{0}^{t}(t-s)\left\lbrace  \mathscr F\left( s,  w(s),w'(s),\int_{0}^{s}\mathscr G\left( s, \sigma, w(\sigma),w'(\sigma)\right)d\sigma\right)\right.\right.\nonumber\\
& \quad\left.\left.-\mathscr F\left( s,  v(s),v'(s),\int_{0}^{s}\mathscr G\left( s, \sigma, v(\sigma),v'(\sigma)\right)d\sigma\right)\right\rbrace   ds\right| \nonumber\\
& \leq\left| \dfrac{\sum\limits_{k=1}^{p}c_{k}}{1+\sum\limits_{k=1}^{p}c_{k}} \right| \left| \frac{T}{\beta-1} \int_{0}^{T}\left\lbrace \mathscr F\left( s,  w(s),w'(s),\int_{0}^{s}\mathscr G\left( s, \sigma, w(\sigma),w'(\sigma)\right)d\sigma\right)\right.\right.\nonumber\\
&\quad\left.\left.-F\left( s,  v(s),v'(s),\int_{0}^{s}\mathscr G\left( s, \sigma, v(\sigma),v'(\sigma)\right)d\sigma\right) \right\rbrace  ds \right.\nonumber\\
& \quad\left.+\int_{0}^{T}T\left\lbrace  \mathscr F\left( s,  w(s),w'(s),\int_{0}^{s}\mathscr G\left( s, \sigma, w(\sigma),w'(\sigma)\right)d\sigma\right)\right.\right.\nonumber\\
& \quad\left.\left.-\mathscr F\left( s,  v(s),v'(s),\int_{0}^{s}\mathscr G\left( s, \sigma, v(\sigma),v'(\sigma)\right)d\sigma\right)\right\rbrace   ds\right| \nonumber\\
&+ \left| \frac{T}{\beta-1} \int_{0}^{T}\left\lbrace  \mathscr F\left( s,  w(s),w'(s),\int_{0}^{s}\mathscr G\left( s, \sigma, w(\sigma),w'(\sigma)\right)d\sigma\right)\right.\right.\nonumber\\
&\quad\left.\left.-F\left( s,  v(s),v'(s),\int_{0}^{s}\mathscr G\left( s, \sigma, v(\sigma),v'(\sigma)\right)d\sigma\right) \right\rbrace  ds \right.\nonumber\\
& \quad\left.+\int_{0}^{T}T\left\lbrace  \mathscr F\left( s,  w(s),w'(s),\int_{0}^{s}\mathscr G\left( s, \sigma, w(\sigma),w'(\sigma)\right)d\sigma\right)\right.\right.\nonumber\\
& \quad\left.\left.-\mathscr F\left( s,  v(s),v'(s),\int_{0}^{s}\mathscr G\left( s, \sigma, v(\sigma),v'(\sigma)\right)d\sigma\right)\right\rbrace  ds\right| \nonumber\\
&= \left( 1+\left| \dfrac{\sum\limits_{k=1}^{p}c_{k}}{1+\sum\limits_{k=1}^{p}c_{k}} \right|\right)\left| \frac{T}{\beta-1} \int_{0}^{T}\left\lbrace  \mathscr F\left( s,  w(s),w'(s),\int_{0}^{s}\mathscr G\left( s, \sigma, w(\sigma),w'(\sigma)\right)d\sigma\right)\right.\right.\nonumber\\
&\quad\left.\left.-\mathscr F\left( s,  v(s),v'(s),\int_{0}^{s}\mathscr G\left( s, \sigma, v(\sigma),v'(\sigma)\right)d\sigma\right) \right\rbrace  ds \right.\nonumber\\
& \quad\left.+\int_{0}^{T}T\left\lbrace \mathscr F\left( s,  w(s),w'(s),\int_{0}^{s}\mathscr G\left( s, \sigma, w(\sigma),w'(\sigma)\right)d\sigma\right)\right.\right.\nonumber\\
& \quad\left.\left.-\mathscr F\left( s,  v(s),v'(s),\int_{0}^{s}\mathscr G\left( s, \sigma, v(\sigma),v'(\sigma)\right)d\sigma\right)\right\rbrace   ds\right| \nonumber\\
& \leq  \left( 1+\left| \dfrac{\sum\limits_{k=1}^{p}c_{k}}{1+\sum\limits_{k=1}^{p}c_{k}} \right|\right) \left[  \frac{T}{\beta-1} \int_{0}^{T}L_{\mathscr F}\,e^{\gamma s}e^{-\gamma s}\left\lbrace  \left|w(s)-v(s)\right| +\left|w'(s)-v'(s)\right|\right\rbrace ds\right.\nonumber\\
& \quad\left.+ \frac{T}{\beta-1} \int_{0}^{T} \int_{0}^{s} L_{\mathscr F}\,L_{\mathscr G}\,e^{\gamma \sigma}e^{-\gamma \sigma}\left\lbrace  \left|w(\sigma)-v(\sigma)\right| +\left|w'(\sigma)-v'(\sigma)\right|\right\rbrace d\sigma\,ds \right.\nonumber\\
& \quad\left. +T\int_{0}^{T} L_{\mathscr F}\,e^{\gamma s}e^{-\gamma s}\left\lbrace  \left|w(s)-v(s)\right| +\left|w'(s)-v'(s)\right|\right\rbrace ds\right.\nonumber\\
& \quad\left.  + T\int_{0}^{T} \int_{0}^{s}L_{\mathscr F}\,L_{\mathscr G}\,e^{\gamma \sigma}e^{-\gamma \sigma}\left\lbrace  \left|w(\sigma)-v(\sigma)\right| +\left|w'(\sigma)-v'(\sigma)\right|\right\rbrace d\sigma\,ds
\right]  \nonumber\\
& \leq  \left( 1+\left| \dfrac{\sum\limits_{k=1}^{p}c_{k}}{1+\sum\limits_{k=1}^{p}c_{k}} \right|\right) \left\lbrace  \frac{T}{\beta-1}L_{\mathscr F} \left(\frac{e^{\gamma T}}{\gamma}- \frac{1}{\gamma}\right) \left\|w-v \right\| _{1}+ \frac{T}{\beta-1} L_{\mathscr F}\,L_{\mathscr G}  \left(\frac{e^{\gamma T}}{\gamma^{2}}- \frac{1}{\gamma^{2}}-\frac{T}{\gamma}\right) \left\|w-v \right\| _{1} \right.\nonumber\\
& \quad\quad\left. +T\,L_{\mathscr F}  \left(\frac{e^{\gamma T}}{\gamma}- \frac{1}{\gamma}\right) \left\|w-v \right\| _{1}+T\,L_{\mathscr F}\,L_{\mathscr G}  \left(\frac{e^{\gamma T}}{\gamma^{2}}- \frac{1}{\gamma^{2}}-\frac{T}{\gamma}\right) \left\|w-v \right\| _{1}
\right\rbrace  \nonumber\\
& \leq  \left( 1+\left| \dfrac{\sum\limits_{k=1}^{p}c_{k}}{1+\sum\limits_{k=1}^{p}c_{k}} \right|\right) \left\lbrace   \frac{T}{\beta-1}L_{\mathscr F} \frac{e^{\gamma T}}{\gamma}  + \frac{T}{\beta-1} L_{\mathscr F}\,L_{\mathscr G}  \frac{e^{\gamma T}}{\gamma^{2}}+T\,L_{\mathscr F}  \frac{e^{\gamma T}}{\gamma}+ T\,L_{\mathscr F}\,L_{\mathscr G}  \frac{e^{\gamma T}}{\gamma^{2}} 
\right\rbrace \left\|w-v \right\| _{1} \nonumber\\
&=\frac{L_{\mathscr F}}{\gamma}\left(1+\frac{L_{\mathscr G}}{\gamma} \right) \left\lbrace  T\beta+T\beta\left| \dfrac{\sum\limits_{k=1}^{p}c_{k}}{1+\sum\limits_{k=1}^{p}c_{k}} \right| \right\rbrace \frac{e^{\gamma\,T}}{\beta-1}\left\|w-v \right\| _{1}.
\end{align}
\end{small}
Similarly we have
\begin{small}
\begin{align}\label{e10}
\left|(\mathcal{P}w)^{'}(t)-(\mathcal{P}v)^{'}(t) \right|& \leq\left| \frac{1}{\beta-1} \int_{0}^{T}\left[ \mathscr F\left( s,  w(s),w'(s),\int_{0}^{s}\mathscr G\left( s, \sigma, w(\sigma),w'(\sigma)\right)d\sigma\right)\right.\right.\nonumber\\
&\quad\left.\left.-F\left( s,  v(s),v'(s),\int_{0}^{s}\mathscr G\left( s, \sigma, v(\sigma),v'(\sigma)\right)d\sigma\right) \right] ds\right| \nonumber\\
& +  \left| \int_{0}^{t}\left[ \mathscr F\left( s,  w(s),w'(s),\int_{0}^{s}\mathscr G\left( s, \sigma, w(\sigma),w'(\sigma)\right)d\sigma\right)\right.\right.\nonumber\\
&\quad\left.\left.-\mathscr F\left( s,  v(s),v'(s),\int_{0}^{s}\mathscr G\left( s, \sigma, v(\sigma),v'(\sigma)\right)d\sigma\right) \right] ds\right|\nonumber\\
& \leq \frac{L_{\mathscr F}}{\gamma}\left(1+\frac{L_{\mathscr G}}{\gamma} \right)\left(\frac{e^{\gamma\,T}}{\beta-1}+e^{\gamma\, t} \right)  \left\|w-v \right\| _{1}.
\end{align}
\end{small}
Thus from \eqref{e9} and \eqref{e10} we have
\begin{small}
\begin{align}\label{e11}
&\left|(\mathcal{P}w)(t)-(\mathcal{P}v)(t) \right|+\left|(\mathcal{P}w)^{'}(t)-(\mathcal{P}v)^{'}(t) \right|\nonumber\\
& \leq \frac{L_{\mathscr F}}{\gamma}\left(1+\frac{L_{\mathscr G}}{\gamma} \right)\left\lbrace \left(   T\beta+T\beta\left| \dfrac{\sum\limits_{k=1}^{p}c_{k}}{1+\sum\limits_{k=1}^{p}c_{k}} \right|  \right) \frac{e^{\gamma\,T}}{\beta-1}+ \frac{e^{\gamma\,T}}{\beta-1}+e^{\gamma\, t}\right\rbrace\left\|w-v \right\| _{1},\, t \in J.
\end{align}
\end{small}
Therefore
\begin{small}
\begin{align*}
\left\|\mathcal{P}w-\mathcal{P}v \right\|_{1}
&=\underset{t\in J}{\max}\,\,\frac{1}{e^{\gamma\, t}} \left\lbrace \left|(\mathcal{P}w)(t)-(\mathcal{S}w)(t) \right|+\left|(\mathcal{P}w)^{'}(t)-(\mathcal{S}w)(t)\right| \right\rbrace \nonumber\\
&\leq\frac{L_{\mathscr F}}{\gamma}\left(1+\frac{L_{\mathscr G}}{\gamma} \right)\left( 1+\left[ 1+\left\lbrace  T\beta+T\beta\left| \dfrac{\sum\limits_{k=1}^{p}c_{k}}{1+\sum\limits_{k=1}^{p}c_{k}} \right| \right\rbrace  \right]\frac{e^{\gamma\,T}}{\beta-1} \right)\left\|w-v \right\| _{1}.
\end{align*}
\end{small}
Choose $ \gamma>0$ such that 
\begin{small}
$$ \frac{L_{\mathscr F}}{\gamma}\left(1+\frac{L_{\mathscr G}}{\gamma} \right)\left( 1+\left[ 1+\left\lbrace  T\beta+T\beta\left| \dfrac{\sum\limits_{k=1}^{p}c_{k}}{1+\sum\limits_{k=1}^{p}c_{k}} \right| \right\rbrace  \right]\frac{e^{\gamma\,T}}{\beta-1} \right)<1.$$
\end{small} 
Hence $\mathcal{P}$ is contraction  and by Banach contraction principle $\mathcal{P}$ has a fixed point $\bar{w}$ in  $ \left(C^{1}(J,\mathbb{R}),\left\| \cdot\right\|_{1}  \right) $ which is solution of VIDNBC \eqref{1}--\eqref{3}.
\end{proof}
\section{Dependency of solutions via PO:}
In this section we give the dependence of solution of problem \eqref{1}--\eqref{3} on the functions involved in the right hand side of equations \eqref{1}--\eqref{2} and on the initial nonlocal data via Picard operator theory. Note that \cite{Wang} every contraction operator is a Picard operator.
\begin{small}
\begin{align}
& w^{''}(t)=\tilde{\mathscr F}\left( t,  w(t),w'(t),\int_{0}^{t}\tilde{\mathscr G}\left( t, s, w(s),w'(s)\right) ds\right),\,t\in J=[0,T],\, T>0,\label{4}\\
& w(0)+\sum_{k=1}^{p} c_{k} w(t_{k})=\tilde{w}_{0},\label{5}\\
& w'(T)= \beta\, w'(0),\,\, 1<\beta<+\infty. \label{6}
\end{align}
\end{small}
Then its equivalent Volterra integral equation is 
\begin{small}
\begin{align}\label{e13}
& w(t)=\left(\tilde{w}_{0}-\sum_{k=1}^{p}c_{k}\left[ \frac{t_{k}}{\beta-1} \int_{0}^{T}\tilde{\mathscr F}\left( s,  w(s),w'(s),\int_{0}^{s}\tilde{\mathscr G}\left( s, \sigma, w(\sigma),w'(\sigma)\right)d\sigma\right) ds \right.\right.\nonumber\\
&\qquad\left.\left.+\int_{0}^{t_{k}}(t_{k}-s)\tilde{\mathscr F}\left( s,  w(s),w'(s),\int_{0}^{s}\tilde{\mathscr G}\left( s, \sigma, w(\sigma),w'(\sigma)\right)d\sigma\right) ds\right] \right)\bigg/\left( 1+\sum_{k=1}^{p}c_{k}\right) \nonumber\\
&\qquad \quad +\frac{t}{\beta-1} \int_{0}^{T}\tilde{\mathscr F}\left( s,  w(s),w'(s),\int_{0}^{s}\tilde{\mathscr G}\left( s, \sigma, w(\sigma),w'(\sigma)\right)d\sigma\right) ds \nonumber\\
&\qquad \qquad+\int_{0}^{t}(t-s)\tilde{\mathscr F}\left( s,  w(s),w'(s),\int_{0}^{s}\tilde{\mathscr G}\left( s, \sigma, w(\sigma),w'(\sigma)\right)d\sigma\right) ds.
\end{align}
\end{small}
\begin{theorem}
Suppose the following:
\begin{itemize}
\item[$(H1)^{'}$]All the conditions in Theorem \ref{thm1} are satisfied and $ w^{\ast}\in C^{1}\left( J, \mathbb{R}\right) $ is the unique solution of the integral equation \eqref{e1}.
\item[$(H2)^{'}$] There exists  $L_{\tilde{\mathscr F}}, L_{\tilde{\mathscr G}}> 0$  such that
\begin{small}
 $$ \left|\tilde{\mathscr F} (t,w_{1},w_{2},w_{3})-\tilde{\mathscr F}(t,v_{1},v_{2},v_{3}) \right|\leq L_{\tilde{\mathscr F}} \left(\sum_{j=1}^{3} \left| w_{j}-v_{j}\right| \right) $$
\end{small}
and
\begin{small}
$$ \left|\tilde{\mathscr G}(t,s,w_{1},w_{2})-\tilde{\mathscr G}(t,s,v_{1},v_{2}) \right|\leq L_{\tilde{\mathscr G}} \left(\sum_{j=1}^{2} \left| w_{j}-v_{j}\right| \right) $$
\end{small}
for all $t,s \in J$ and $w_{j},v_{j}\in \mathbb{R}\, (j=1,2,3)$.
\item[$(H3)^{'}$] There exists a function $\mu(\cdot)\in L^{1}\left(J,\mathbb{R}_{+} \right) \cap C\left(J,\mathbb{R}_{+} \right) $ such that
$$ \left|\mathscr F(t,u,v,w)-\tilde{\mathscr F}(t,u,v,\tilde{w}) \right|\leq \mu(t)  $$
for all $t \in J$ and $u,\,v,\,w,\,\tilde{w}\in \mathbb{R}$.
\end{itemize}
Then if $v^{\ast}$ is the solution of integral equations \eqref{e13} then 
\begin{small}
$$ \left\|w^{\ast}-v^{\ast} \right\|_{1}\leq \frac{\left| \frac{1}{1+\sum\limits_{k=1}^{p}c_{k}}\right| \left|w_{0}-\tilde{w}_{0} \right|+ \frac{\beta  L_{\mu}}{\beta-1}\left[ 1+\left( 1 +\left| \frac{\sum\limits_{k=1}^{p}c_{k}}{1+\sum\limits_{k=1}^{p}c_{k}}\right|\right) T\right]  }{1-q} ,$$
\end{small}
where $L_{\mu}=\int_{0}^{T} \mu(s)\, ds$.
\end{theorem}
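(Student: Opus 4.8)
The plan is to recast both the original problem and the perturbed problem as fixed point equations and then apply the abstract comparison result, Theorem \ref{B1}. Alongside the operator $\mathcal{P}$ from the proof of Theorem \ref{thm1} (whose formula is the right hand side of \eqref{e1}), I would introduce the operator $\tilde{\mathcal{P}}$ on $\left(C^{1}(J,\mathbb{R}),\|\cdot\|_{1}\right)$ whose formula is the right hand side of \eqref{e13}. By the equivalence established in the Lemma (and its verbatim analogue for the tilde problem), $\mathbf{F}_{\mathcal{P}}$ consists exactly of the $C^{1}(J,\mathbb{R})$-solutions of \eqref{e1} and $\mathbf{F}_{\tilde{\mathcal{P}}}$ of those of \eqref{e13}; in particular $w^{\ast}\in\mathbf{F}_{\mathcal{P}}$ and $v^{\ast}\in\mathbf{F}_{\tilde{\mathcal{P}}}$.

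With $\mathcal{A}=\mathcal{P}$ and $\mathcal{B}=\tilde{\mathcal{P}}$, I would verify the three hypotheses of Theorem \ref{B1}. Hypothesis (a) is immediate from $(H1)'$ and Theorem \ref{thm1}: $\mathcal{P}$ is a contraction with constant $\alpha=q<1$ and $\mathbf{F}_{\mathcal{P}}=\{w^{\ast}\}$. Hypothesis (b) follows because $(H2)'$ endows $\tilde{\mathscr F},\tilde{\mathscr G}$ with the Lipschitz structure that (H1) required of $\mathscr F,\mathscr G$, so the proof of Theorem \ref{thm1} applies unchanged to $\tilde{\mathcal{P}}$ and yields its fixed point $v^{\ast}$. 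The real work is hypothesis (c): finding $\rho>0$ with $\|\mathcal{P}w-\tilde{\mathcal{P}}w\|_{1}\leq\rho$ for every $w\in C^{1}(J,\mathbb{R})$.

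For (c) I would fix $w$ and $t\in J$ and subtract the two operator formulas termwise. The nonlocal block contributes $\bigl|\tfrac{1}{1+\sum_{k=1}^{p}c_{k}}\bigr|\,|w_{0}-\tilde{w}_{0}|$ together with a $c_k$-weighted sum of integrals of $\mathscr F-\tilde{\mathscr F}$ (divided by $1+\sum c_k$), and the free part contributes two more such integrals; for $(\mathcal{P}w)'-(\tilde{\mathcal{P}}w)'$ only the last two blocks survive. In each of these integrals the integrand is $\mathscr F\bigl(s,w(s),w'(s),\int_{0}^{s}\mathscr G(\cdots)d\sigma\bigr)-\tilde{\mathscr F}\bigl(s,w(s),w'(s),\int_{0}^{s}\tilde{\mathscr G}(\cdots)d\sigma\bigr)$, which by $(H3)'$ is bounded by $\mu(s)$ uniformly over the fourth slot; hence no Lipschitz comparison of $\mathscr G$ with $\tilde{\mathscr G}$ is needed, and the resulting bound does not depend on $w$. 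Using $t_{k}\le T$, $t_{k}-s\le T$, $t-s\le T$ and $\int_{0}^{T}\mu(s)\,ds=L_{\mu}$ and collecting terms should give
$$\bigl|(\mathcal{P}w)(t)-(\tilde{\mathcal{P}}w)(t)\bigr|+\bigl|(\mathcal{P}w)'(t)-(\tilde{\mathcal{P}}w)'(t)\bigr|\le \Bigl|\tfrac{1}{1+\sum_{k=1}^{p}c_{k}}\Bigr|\,|w_{0}-\tilde{w}_{0}|+\frac{\beta L_{\mu}}{\beta-1}\Bigl[1+\Bigl(1+\bigl|\tfrac{\sum_{k=1}^{p}c_{k}}{1+\sum_{k=1}^{p}c_{k}}\bigr|\Bigr)T\Bigr]=:\rho.$$
Since $e^{-\gamma t}\le 1$ on $J$, taking the maximum over $t\in J$ yields $\|\mathcal{P}w-\tilde{\mathcal{P}}w\|_{1}\le\rho$, which is hypothesis (c). Theorem \ref{B1} then gives $\|w^{\ast}-v^{\ast}\|_{1}\le\rho/(1-q)$, which is the asserted inequality.

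I expect the only obstacle to be the bookkeeping in that estimate: there are four groups of integral terms for $\mathcal{P}w-\tilde{\mathcal{P}}w$ (two from the nonlocal block, carrying the weight $\bigl|\tfrac{\sum c_k}{1+\sum c_k}\bigr|$, and two from the free part) plus two surviving ones for the derivative, and one must check that after the crude bounds $t_k,\,t-s\le T$ they assemble to precisely the numerator in the statement rather than merely something of the same shape. Recognising that $(H3)'$ is stated so that the nested $\tilde{\mathscr G}$ is never compared with $\mathscr G$ is what makes the estimate close cleanly; the remainder is the mechanical invocation of Theorem \ref{B1}.
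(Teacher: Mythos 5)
Your proposal is correct and follows essentially the same route as the paper: introduce the second operator built from the right-hand side of \eqref{e13} (the paper calls it $\mathcal{S}$), note that $(H1)'$ makes $\mathcal{P}$ a $q$-contraction with fixed point $w^{\ast}$ and that the Theorem \ref{thm1} argument gives the fixed point $v^{\ast}$ of the tilde operator, bound $\|\mathcal{P}w-\mathcal{S}w\|_{1}$ uniformly by the numerator using $(H3)'$ (with exactly the observation that $\mu$ absorbs the fourth argument so $\tilde{\mathscr G}$ never needs to be compared with $\mathscr G$), and conclude via Theorem \ref{B1}. The bookkeeping you defer is precisely the computation the paper carries out, and it assembles to the stated numerator as you predict.
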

\begin{proof}
Consider the  operators
$\mathcal{P},\mathcal{S}: (C^{1}(J,\mathbb{R}), \left\| \cdot\right\|_{1}  ) \to (C^{1}(J,\mathbb{R}), \left\| \cdot\right\|_{1}  ) $ defined by
\begin{small}
\begin{align*}
\mathcal{P}(w)(t)&=\left( w_{0}-\sum_{k=1}^{p}c_{k}\left[ \frac{t_{k}}{\beta-1} \int_{0}^{T}\mathscr F\left( s,  w(s),w'(s),\int_{0}^{s}\mathscr G\left( s, \sigma, w(\sigma),w'(\sigma)\right)d\sigma\right) ds \right.\right.\nonumber\\
&\qquad\left.\left.+\int_{0}^{t_{k}}(t_{k}-s)\mathscr F\left( s,  w(s),w'(s),\int_{0}^{s}\mathscr G\left( s, \sigma, w(\sigma),w'(\sigma)\right)d\sigma\right) ds\right] \right)\bigg/\left( 1+\sum_{k=1}^{p}c_{k}\right) \nonumber\\
&\qquad \quad +\frac{t}{\beta-1} \int_{0}^{T}\mathscr F\left( s,  w(s),w'(s),\int_{0}^{s}\mathscr G\left( s, \sigma, w(\sigma),w'(\sigma)\right)d\sigma\right) ds \nonumber\\
&\qquad \qquad+\int_{0}^{t}(t-s)\mathscr F\left( s,  w(s),w'(s),\int_{0}^{s}\mathscr G\left( s, \sigma, w(\sigma),w'(\sigma)\right)d\sigma\right) ds.
\end{align*}
\end{small}
and
\begin{small}
\begin{align*}
\mathcal{S}(w)(t)&=\left(\tilde{w}_{0}-\sum_{k=1}^{p}c_{k}\left[ \frac{t_{k}}{\beta-1} \int_{0}^{T}\tilde{\mathscr F}\left( s,  w(s),w'(s),\int_{0}^{s}\tilde{\mathscr G}\left( s, \sigma, w(\sigma),w'(\sigma)\right)d\sigma\right) ds \right.\right.\nonumber\\
&\qquad\left.\left.+\int_{0}^{t_{k}}(t_{k}-s)\tilde{\mathscr F}\left( s,  w(s),w'(s),\int_{0}^{s}\tilde{\mathscr G}\left( s, \sigma, w(\sigma),w'(\sigma)\right)d\sigma\right) ds\right] \right)\bigg/\left( 1+\sum_{k=1}^{p}c_{k}\right) \nonumber\\
&\qquad \quad +\frac{t}{\beta-1} \int_{0}^{T}\tilde{\mathscr F}\left( s,  w(s),w'(s),\int_{0}^{s}\tilde{\mathscr G}\left( s, \sigma, w(\sigma),w'(\sigma)\right)d\sigma\right) ds \nonumber\\
&\qquad \qquad+\int_{0}^{t}(t-s)\tilde{\mathscr F}\left( s,  w(s),w'(s),\int_{0}^{s}\tilde{\mathscr G}\left( s, \sigma, w(\sigma),w'(\sigma)\right)d\sigma\right) ds.
\end{align*}
\end{small}
By condition $(H1)^{'}\,\,\, \mathcal{P}$ is a contraction with contraction constant $q$. Let $\mathbf{F}_{\mathcal{P}}=\{w^{\ast}\}$. Following same steps in the proof of the Theorem \ref{thm1}, the operator that $\mathcal{S}$ is contraction with contraction constant
\begin{small}
$$ \tilde{q}=\frac{L_{\tilde{\mathscr F}}}{\gamma}\left(1+\frac{L_{\tilde{\mathscr G}}}{\gamma} \right)\left( 1+\left[ 1+\left\lbrace  T\beta+T\beta\left| \dfrac{\sum\limits_{k=1}^{p}c_{k}}{1+\sum\limits_{k=1}^{p}c_{k}} \right| \right\rbrace  \right]\frac{e^{\gamma\,T}}{\beta-1} \right)<1.$$
\end{small} 
Hence the VIDNBC \eqref{4}--\eqref{6} has a unique solution. Let $\mathbf{F}_{\mathcal{S}}= \{v^{\ast}\}$. For any $w \in  \left(C^{1}(J,\mathbb{R}),\left\| \cdot\right\|_{1}  \right)$. Then any $t\in J$, we have
\begin{small}
\begin{align}\label{e14}
&\left|(\mathcal{P}w)(t)-(\mathcal{S}w)(t) \right|\nonumber\\
&\leq\left| \dfrac{1}{1+\sum\limits_{k=1}^{p}c_{k}}\right|\left|w_{0}-\tilde{w}_{0} \right|+\left( 1+\left| \dfrac{\sum\limits_{k=1}^{p}c_{k}}{1+\sum\limits_{k=1}^{p}c_{k}} \right|\right)\left| \frac{T}{\beta-1} \int_{0}^{T}\left\lbrace \mathscr F\left( s,  w(s),w'(s),\int_{0}^{s}\mathscr G\left( s, \sigma, w(\sigma),w'(\sigma)\right)d\sigma\right)\right.\right.\nonumber\\
&\quad\left.\left.-\tilde{\mathscr F}\left( s,  w(s),w'(s),\int_{0}^{s}\tilde{\mathscr G}\left( s, \sigma, w(\sigma),w'(\sigma)\right)d\sigma\right) \right\rbrace  ds \right.\nonumber\\
& \quad\left.+\int_{0}^{T}T\left\lbrace  \mathscr F\left( s,  w(s),w'(s),\int_{0}^{s}\mathscr G\left( s, \sigma, w(\sigma),w'(\sigma)\right)d\sigma\right)\right.\right.\nonumber\\
& \quad\left.\left.-\tilde{\mathscr F}\left( s,  w(s),w'(s),\int_{0}^{s}\tilde{\mathscr G}\left( s, \sigma, w(\sigma),w'(\sigma)\right)d\sigma\right)\right\rbrace   ds\right| \nonumber\\
&\leq \left| \dfrac{1}{1+\sum\limits_{k=1}^{p}c_{k}}\right| \left|w_{0}-\tilde{w}_{0} \right| +\left( 1+\left| \dfrac{\sum\limits_{k=1}^{p}c_{k}}{1+\sum\limits_{k=1}^{p}c_{k}} \right|\right)\left\lbrace \frac{T}{\beta-1} \int_{0}^{T} \mu(s)ds+ T \int_{0}^{T} \mu(s)ds\right\rbrace\\ 
&\leq \left| \dfrac{1}{1+\sum\limits_{k=1}^{p}c_{k}}\right| \left|w_{0}-\tilde{w}_{0} \right| +\left( 1+\left| \dfrac{\sum\limits_{k=1}^{p}c_{k}}{1+\sum\limits_{k=1}^{p}c_{k}} \right|\right)\frac{\beta TL_{\mu}}{\beta-1} .
\end{align}
\end{small}
Similarly we have
\begin{small}
\begin{align}\label{e15}
\left|(\mathcal{P}w)^{'}(t)-(\mathcal{S}w)^{'}(t) \right|& \leq\left| \frac{1}{\beta-1} \int_{0}^{T}\left[ \mathscr F\left( s,  w(s),w'(s),\int_{0}^{s}\mathscr G\left( s, \sigma, w(\sigma),w'(\sigma)\right)d\sigma\right)\right.\right.\nonumber\\
&\quad\left.\left.- \tilde{\mathscr F}\left( s,  w(s),w'(s),\int_{0}^{s}\tilde{\mathscr G}\left( s, \sigma, w(\sigma),w'(\sigma)\right)d\sigma\right) \right] ds\right| \nonumber\\
& +  \left| \int_{0}^{t}\left[ \mathscr F\left( s,  w(s),w'(s),\int_{0}^{s}\mathscr G\left( s, \sigma, w(\sigma),w'(\sigma)\right)d\sigma\right)\right.\right.\nonumber\\
&\quad\left.\left.- \tilde{\mathscr F}\left( s,  w(s),w'(s),\int_{0}^{s}\tilde{\mathscr G}\left( s, \sigma, w(\sigma),w'(\sigma)\right)d\sigma\right) \right] ds\right|\nonumber\\
&\leq\frac{1}{\beta-1} \int_{0}^{T} \mu(s)ds+  \int_{0}^{T} \mu(s)ds\\
& \leq \frac{\beta  L_{\mu}}{\beta-1}.
\end{align}
\end{small}
From \eqref{e14} and \eqref{e14}
\begin{small}
\begin{align*}
\left\| \mathcal{P}w- \mathcal{S}w\right\|_{1}&=\underset{t\in J}{\max}\,\,\frac{1}{e^{\gamma\, t}} \left\lbrace \left|(\mathcal{P}w)(t)-(\mathcal{S}w)(t) \right|+\left|(\mathcal{P}w)^{'}(t)-(\mathcal{S}w)^{'}(t) \right| \right\rbrace \\
& \leq \left| \frac{1}{1+\sum\limits_{k=1}^{p}c_{k}}\right| \left|w_{0}-\tilde{w}_{0} \right|+ \frac{\beta  L_{\mu}}{\beta-1}\left[ 1+\left( 1 +\left| \frac{\sum\limits_{k=1}^{p}c_{k}}{1+\sum\limits_{k=1}^{p}c_{k}}\right|\right) T\right]. 
\end{align*}
\end{small}
Using the Theorem \ref{B1} we get following inequality 
\begin{small}
\begin{align}\label{e16}
\left\|w^{\ast}-v^{\ast} \right\|_{1}\leq \frac{\left| \frac{1}{1+\sum\limits_{k=1}^{p}c_{k}}\right| \left|w_{0}-\tilde{w}_{0} \right|+ \frac{\beta L_{\mu}}{\beta-1}\left[ 1+\left( 1 +\left| \frac{\sum\limits_{k=1}^{p}c_{k}}{1+\sum\limits_{k=1}^{p}c_{k}}\right|\right) T\right]  }{1-q} .
\end{align}
\end{small}
\end{proof}
\begin{rem}
\begin{itemize}
\item[{\rm (i)}] From inequality \eqref{e16} it follows that the solutions of the VIDNBC \eqref{1}--\eqref{3} depends on the  initial nonlocal data and  functions involved on the right hand side of equation.
\item[{\rm(ii)}] If $\mu(t)=0 $ then $L_{\mu}=0$. In this case  inequality \eqref{e16} gives dependency of solutions on initial nonlocal data.
\item[{\rm(iii)}] If $w_{0}=\tilde{w}_{0}$ then inequality \eqref{e16} gives dependency of solutions on functions involved on the right hand side of equation.
\item[{\rm(iv)}]  If $\mu(t)=0 $ and $w_{0}=\tilde{w}_{0}$  then $L_{\mu}=0$, in this case  inequality \eqref{e16}  gives uniqueness of solution.
\end{itemize}
\end{rem}
\section{Examples:}
\begin{ex}
Consider the second order Volterra integrodifferential equations with nonlocal and boundary conditions:
\begin{small}
\begin{align} 
& w^{''}(t) = 0.010540+\frac{1}{10} \sin\left(\frac{1}{10} \right)- \frac{\cos(w(t))}{1000}- \frac{\sin(w'(t))}{100}\nonumber\\
&\qquad \qquad\qquad  +\frac{1}{100}\int_0^t \frac{1}{10} \left[ w(s)-\frac{e^{\frac{s}{10}}}{10} \sin (w(s))+\frac{e^{\frac{s}{10}}}{10} \cos (w'(s))\right] \,ds,\, ~t \in J=[0,1],\label{7} \\
& w(0)+ w(t_{1})+ w(t_{2})- w(t_{2})+
(0) w(t_{4})+ w(t_{5})=3.10,\,\,\label{8}\\
& w'(1)= e^{\frac{1}{10}}\, w'(0).\,\label{9}
\end{align}
\end{small}
\end{ex}
Comparing with the equation \eqref{1}--\eqref{3} we have  $T=1,\,\beta=e^{\frac{1}{10}},\, t_{1}=0.2,\,t_{2}=0.4,\,t_{3}=0.6,\,t_{4}=0.8,\,t_{5}=1 $ and   $c_{1}=c_{2}=1,\,c_{3}=-1,\,c_{4}=0,\,c_{5}=1$ such that $\sum\limits_{k=1}^{5} c_{k} =2 \not = -1$.
\begin{enumerate}
\item[(i)] Define $\mathscr G:[0,1]\times[0,1]\times\mathbb{R}\times\mathbb{R} \to \mathbb{R} $ by
\begin{small}
$$ \mathscr G\left( t, s, w(s),w'(s)\right) = \frac{1}{10} \left[ w(s)-\frac{e^{\frac{s}{10}}}{10} \sin (w(s))+\frac{e^{\frac{s}{10}}}{10} \cos (w'(s))\right].$$
\end{small}
Then for any $ t,s\in [0,1]$ and $w_{1},w_{2},v_{1},v_{2}\in \mathbb{R}$, we have
\begin{small}
\begin{align*}
\left| \mathscr G (t,s ,w_{1},w_{2})- \mathscr G(t,s ,v_{1},v_{2})\right|
&\leq\frac{1}{10}\left|w_{1}- v_{1}\right| +\frac{e^{\frac{s}{10}}}{10}\left|\sin w_{1}- \sin v_{1}\right|+\frac{e^{\frac{s}{10}}}{10} \left| \cos w_{2}-\cos v_{2}\right|\\
&\leq\frac{1}{10}\left|w_{1}- v_{1}\right| +\frac{e^{\frac{1}{10}}}{10}\left|w_{1}- v_{1}\right|+ \frac{e^{\frac{1}{10}}}{10} \left|  w_{2}- v_{2} \right|\\
& \leq \frac{1+e^{\frac{1}{10}}}{10} \left\lbrace \left|w_{1}- v_{1}\right| + \left| w_{2}- v_{2}\right| \right\rbrace. 
\end{align*}
\end{small}
\item[(ii)] Define $\mathscr F:[0,1] \times \mathbb{R} \times \mathbb{R}\times \mathbb{R} \to \mathbb{R} $ by
\begin{small}
\begin{align*}
&\mathscr F\left( t,  w(t),w'(t),\int_{0}^{t}\mathscr G\left( t, s, w(s),w'(s)\right) ds\right)\\
&= 0.010540+\frac{1}{10} \sin\left(\frac{1}{10} \right)- \frac{\cos(w(t))}{1000}- \frac{\sin(w'(t))}{100}\nonumber\\
&\qquad +\frac{1}{100}\int_0^t \frac{1}{10} \left[ w(s)-\frac{e^{\frac{s}{10}}}{10} \sin (w(s))+\frac{e^{\frac{s}{10}}}{10} \cos (w'(s))\right] \,ds.\\
& =0.010540+\frac{1}{10} \sin\left(\frac{1}{10} \right)- \frac{\cos(w(t))}{1000}- \frac{\sin(w'(t))}{100}+\int_0^t  G\left( t, s, w(s),w'(s)\right) ds.
\end{align*}
\end{small}
\end{enumerate}
Then for any $ t\in [0,1]$ and $w_{1},w_{2},w_{3},v_{1},v_{2},v_{3}\in \mathbb{R}$, we have
\begin{small}
\begin{align*}
\left| \mathscr F (t,w_{1},w_{2},w_{3})- \mathscr F(t ,v_{1},v_{2},v_{3})\right|
&\leq\frac{1}{1000}\left|\cos w_{1}- \cos v_{1}\right| +\frac{1}{100}\left|\sin w_{2}- \sin v_{2}\right|+\frac{1}{100}\left|w_{3}- v_{3}\right|\\
& \leq \frac{1}{100} \left\lbrace \left|w_{1}- v_{1}\right| + \left| w_{2}- v_{2}\right| +\left|w_{3}- v_{3}\right|\right\rbrace .
\end{align*}
\end{small}
We have proved that $L_{ \mathscr F}$ and $L_{ \mathscr G}$  satisfied the conditions (H1) with $L_{ \mathscr F}=\frac{1}{100},\,\text{and}\,\,L_{ \mathscr G}=\frac{1+e^{\frac{1}{10}}}{10}$.  
\begin{small}
\begin{align*}
q& = \frac{L_{\mathscr F}}{\gamma}\left(1+\frac{L_{\mathscr G}}{\gamma} \right)\left( 1+\left[ 1+\left\lbrace  T\beta+T\beta\left| \dfrac{\sum\limits_{k=1}^{p}c_{k}}{1+\sum\limits_{k=1}^{p}c_{k}} \right| \right\rbrace  \right]\frac{e^{\gamma\,T}}{\beta-1} \right)\\
&=\frac{\frac{1}{100}}{\gamma}\left(1+\frac{\frac{1+e^{\frac{1}{10}}}{10}}{\gamma} \right)\left( 1+\left[ 1+\left\lbrace  1\,e^{\frac{1}{10}} +1\,e^{\frac{1}{10}} \left| \frac{2}{1+2} \right| \right\rbrace  \right]\frac{e^{\gamma\,1}}{e^{\frac{1}{10}}-1} \right).
\end{align*}
\end{small}
Note for $ \gamma = 1,$ we have
\begin{small}
$$q=\frac{1}{100}\left(1+ \frac{1+e^{\frac{1}{10}}}{10} \right)\left( 1+\left[ 1+\left\lbrace  e^{\frac{1}{10}}+e^{\frac{1}{10}}\left| \frac{2}{3} \right| \right\rbrace  \right]\frac{e}{e^{\frac{1}{10}}-1} \right) = 0.901278<1.$$
\end{small}
Thus all the assumptions of the Theorem \ref{thm1} are satisfied. Applying the Theorem \ref{thm1}, the problem \eqref{7}--\eqref{9} has unique solution on $[0,1]$. One can verify that $w(t)=e^{\frac{t}{10}},\, t\in [0,1]$ is the unique solution  \eqref{7}--\eqref{9}.
\begin{ex}
Consider the following second order  Volterra integrodifferential  equations:
\begin{small}
\begin{align} 
& w^{''}(t) =\frac{2}{10}- \frac{t^2}{1000}-\frac{(9-t)}{1000}+ \frac{\cos(w(t))}{100}- \frac{w'(t)}{100}\nonumber\\
& \qquad\qquad\qquad +\frac{1}{100}\int_0^t  \left[ \left( \frac{1+2s}{10}\right) \sin (w(s))+ w'(s)\right] \,ds,\,t \in J= [0,2],\label{10}
\end{align}
\end{small}
subject to
\begin{small}
\begin{align}
& w(0)+ w(t_{1})+w(t_{2})+w(t_{3})+w(t_{4})=1.35,\,\,\label{11}\\
& w'(2)= 5\, w'(0),\,\label{12}
\end{align}
\end{small}
\end{ex}
Comparing with the equation \eqref{1}--\eqref{3} we have  $T=2,\,\beta=5,\, t_{1}=0.5,\,t_{2}=1,\,t_{3}=1.5,\,t_{4}=2$ and   $c_{1}=c_{2}=c_{3}=c_{4}=1$ such that $\sum\limits_{k=1}^{4} c_{k} =4 \not = -1$.\\
Define $\mathscr G:[0,2]\times[0,2]\times\mathbb{R}\times\mathbb{R} \to \mathbb{R} $ by
\begin{small}
$$ \mathscr G\left( t, s, w(s),w'(s)\right) = \left[ \left( \frac{1+2s}{10}\right) \sin (w(s))+ w'(s)\right].$$
\end{small}
Then 
\begin{small}
$$
\left| \mathscr G (t,s ,w_{1},w_{2})- \mathscr G(t,s ,v_{1},v_{2})\right|
\leq  \left\lbrace \left|w_{1}- v_{1}\right| + \left| w_{2}- v_{2}\right| \right\rbrace\,\,t,s\in [0,2]\,\, \text{and}\,\, w_{1},w_{2},v_{1},v_{2}\in \mathbb{R}.
 $$
\end{small}
Further, define $\mathscr F:[0,2] \times \mathbb{R} \times \mathbb{R}\times \mathbb{R} \to \mathbb{R} $ by
\begin{small}
\begin{align*}
&\mathscr F\left( t,  w(t),w'(t),\int_{0}^{t}\mathscr G\left( t, s, w(s),w'(s)\right) ds\right)\\
& =\frac{2}{10}- \frac{t^2}{1000}-\frac{(9-t)}{1000}+ \frac{\cos(w(t))}{100}- \frac{w'(t)}{100} +\frac{1}{100}  \int_{0}^{t} \mathscr G\left( t, s, w(s),w'(s)\right) ds.
\end{align*}
\end{small}
Then 
\begin{small}
\begin{align*}
& \left| \mathscr F (t,w_{1},w_{2},w_{3})- \mathscr F(t ,v_{1},v_{2},v_{3})\right|\\
 & \leq \frac{1}{100} \left\lbrace \left|w_{1}- v_{1}\right| + \left| w_{2}- v_{2}\right| +\left|w_{3}- v_{3}\right|\right\rbrace ,\,  t\in [0,2] \,\,\text{and}\,\, w_{1},w_{2},w_{3},v_{1},v_{2},v_{3}\in \mathbb{R}.
\end{align*}
\end{small}
All the conditions of the Theorem \ref{thm1} are satisfied with   $L_{ \mathscr F}=\frac{1}{100}$ and $L_{ \mathscr G}=1$. Take $ \gamma=2$  then we have 
\begin{small}
\begin{align*}
q= \frac{L_{\mathscr F}}{\gamma}\left(1+\frac{L_{\mathscr G}}{\gamma} \right)\left( 1+\left[ 1+\left\lbrace  T\beta+T\beta\left| \dfrac{\sum\limits_{k=1}^{p}c_{k}}{1+\sum\limits_{k=1}^{p}c_{k}} \right| \right\rbrace  \right]\frac{e^{\gamma\,T}}{\beta-1} \right)=0.8395<1.
\end{align*}
\end{small}
Thus the Theorem \ref{thm1} guarantee  uniqueness solutions of \eqref{10}--\eqref{12}. By actual substitution one can verify that $w(t)=\frac{(t+t^2)}{10},\, t\in [0,2]$ is the unique solution  \eqref{10}--\eqref{12}.
\section{Concluding Remarks}
We conclude the paper with the objectives achieved: the existence, uniqueness and dependency of solution for a second order Volterra integrodifferential equations with nonlocal and boundary conditions  through the Picard operators theory. Note that the fractional calculus is the branch of mathematical analysis, which has been on the rise over the decade, for its well-established theory and its important results that have resulted to the point it has attained today \cite{jose,jose1}. Although many papers have been published and the theory has been expanded, there are many avenues to be covered. In this sense, an interesting idea is to adapt some results obtained here for the field of fractional calculus, especially fractional differential equations, to put other results, and to present the improvements that can be obtained, in particular, making some comparisons through examples, in order to elucidate the results obtained and consequently, the differences between the whole case and the fractional one \cite{sousa,sousa1}.
\section*{Acknowledgement}
 The first author is financially supported by UGC, New Delhi, India (Ref: F1-17.1/2017-18/RGNF-2017-18-SC-MAH-43083).  The third author JVCS acknowledges the financial support of a PNPD-CAPES (process number nº88882.305834/2018-01) scholarship of the Postgraduate Program in Applied Mathematics of IMECC-Unicamp.
% % % % % % % % % % % % % % % % % % % % % % % % % % % % % %

\end{document}